\newtheorem{theorem}{Theorem}
\newtheorem{remark}{Remark}
\newtheorem{corollary}{Corollary}
\def\const{\mathop{\rm const.}}
\def\sign{\mathop{\rm Sign}}
\begin{document}

\baselineskip=17pt

\title{The equi-affine and Frenet curvatures of curves in pseudo-Riemannian 2-manifolds}

\author{Karina Olszak and Zbigniew Olszak}

\dedicatory{Dedicated to the memory of Professor Witold Roter}

\address{Department of Mathematics (Karina Olszak) and Department of \linebreak Applied Mathematics (Zbigniew Olszak), Wroc{\l}aw University of Science and Technology, Wybrze\.ze Wyspia\'nskiego 27, 50-370 Wroc{\l}aw, Poland}

\date{}

\begin{abstract}
For an arbitrary nondegenerate curve in a pseudo-Riemann\-ian (including Riemannian) 2-manifold, we express the equi-affine curvature with the help of the Frenet (geodesic) curvature of this curve. 
\end{abstract}

\subjclass[2010]{Primary 53C42; Secondary 53B05, 53B30, 53C50}

\keywords{Equi-affine manifold, Riemannian manifold, pseudo-Riemann\-ian manifold, equi-affine curvature of a curve, Frenet curvature of a curve}

\maketitle

\section{Introduction}

The Frenet (geodesic) and the equi-affine curvatures of curves (mainly, in the Euclidean or affine plane) play a fundamental role and have important applications in analytic, geometric, technical and biological investigations. Recently, among others, investigations concerning affine plane curves, affinely invariant numerical approximations, geometric approximation of curves and the study of the arm movements of humans and other primates appeared in \cite{COT,Dym,FH,Gh,ST}, etc. 

It is a natural question to find relations between the Frenet (geodesic) curvature and the equi-affine curvature of a nondegenerate curve in an arbitrary pseudo-Riemannian (including Riemannian) 2-manifold. In the present paper, our goal is to establish a precise relation between those curvatures. Our result generalizes certain partial results known in the case of curves in the Euclidean plane (cf.\ \cite{FH,Gh,ST}). From our formula, it can be seen that the constant Frenet curvature implies the constant equi-affine curvature. We also provide examples of curves with non-constant Frenet curvature and constant equi-affine curvature in certain Riemannian and Lorentzian 2-manifolds. But first, following the classical ideas, we arrive at formulas for the equi-affine curvature in an arbitrary equi-affine 2-manifold, and for the Frenet curvature in an arbitrary pseudo-Riemannian 2-manifold. 

\section{Curves in equi-affine 2-manifolds}

In this section, following the classical books (see \cite{BR,Bu,Gu,NS}) and certain recent papers (see \cite{FT,IS}, etc.), we present the notion of the equi-affine curvature of a curve in an arbitrary 2-dimensional equiaffine manifold.

Let $(M^2,\nabla,\varOmega)$ be an equi-affine 2-manifold, that is, $M^2$ is a connected, 2-dimensional differentiable manifold endowed with a symmetric affine connection $\nabla$ and a volume form $\varOmega$, which is parallel with respect to $\nabla$ (for this notion, cf.\ e.g.\ \cite{NS}).

By a nondegenerate curve in $(M^2,\nabla,\varOmega)$, we mean a (smooth) mapping $\alpha\colon I\to M^2$, $I$ being an open interval, such that $\varOmega(\alpha^{\,\prime}(t), (\nabla_{\alpha^{\,\prime}}\alpha^{\,\prime})(t)) \neq 0$ for any $t\in I$. $\sign(\varOmega(\alpha^{\,\prime}, \nabla_{\alpha^{\,\prime}}\alpha^{\,\prime}))\;(=\pm1)$ is the orientation of the frame $(\alpha^{\,\prime},\nabla_{\alpha^{\,\prime}}\alpha^{\,\prime})$ with respect to the volume form $\varOmega$. 

Let $\alpha\colon I\to M^2$ be a nondegenerate curve in an equi-affine 2-manifold $(M^2,\nabla,\varOmega)$. Let $h\colon I\to J$, $J$ being an open interval, be a smooth function onto $J$ such that $h^{\,\prime}\neq0$ for any $t\in I$. Consider the reparametrization $\overline\alpha\colon J\to M^2$ of the curve $\alpha$ defined by $\alpha=\overline{\alpha}\circ h$. Denote the new parameter by $\sigma$ and write $\sigma(t)$ instead of $h(t)$. Then,  we have $\alpha^{\,\prime}(t) = \overline{\alpha}^{\,\prime}(\sigma(t)) \sigma^{\,\prime}(t)$. Removing the argument $t$, we can write 
\begin{align*}
  \varOmega(\alpha^{\,\prime},
            \nabla_{\alpha^{\,\prime}}\alpha^{\,\prime})
  = \sigma^{\,\prime\,3} 
    (\varOmega(\overline{\alpha}^{\,\prime},
              \nabla_{\overline{\alpha}^{\,\prime}}
                   \overline{\alpha}^{\,\prime}))\circ\sigma. 
\end{align*}
Let us call the new parameter $\sigma$ the equi-affine arclength parameter if $\varOmega(\overline{\alpha}^{\,\prime}, \nabla_{\overline{\alpha}^{\,\prime}} \overline{\alpha}^{\,\prime}) = 1$. It is obvious that this condition holds if and only if 
\begin{align*}
  \sigma^{\,\prime} = \sqrt[3]{\varOmega(\alpha^{\,\prime},
            \nabla_{\alpha^{\,\prime}}\alpha^{\,\prime})}.
\end{align*}
The equi-affine arclength parameter $\sigma$ starting at $t_0\in I$ is
\begin{align*}
  \sigma(t) = \int^t_{t_0}
         \sqrt[3]{\varOmega(\alpha^{\,\prime}(u),
            (\nabla_{\alpha^{\,\prime}}\alpha^{\,\prime})(u))}\,du, \ t\in I.
\end{align*}

In the sequel, $\overline{\alpha}\colon J\to M^2$ denotes an equi-affine arclength parameterizaton of $\alpha$. Since $\varOmega(\overline{\alpha}^{\,\prime}, \nabla_{\overline{\alpha}^{\,\prime}} \overline{\alpha}^{\,\prime}) = 1$, one has $\varOmega(\overline{\alpha}^{\,\prime}, \nabla^2_{\overline{\alpha}^{\,\prime}} \overline{\alpha}^{\,\prime}) = 0$, where $\nabla^2_{\overline{\alpha}^{\,\prime}} \overline{\alpha}^{\,\prime} = \nabla_{\overline{\alpha}^{\,\prime}} (\nabla_{\overline{\alpha}^{\,\prime}}\overline{\alpha}^{\,\prime})$. Therefore, $\nabla^2_{\overline{\alpha}^{\,\prime}} \overline{\alpha}^{\,\prime}$ and  $\overline{\alpha}^{\,\prime}$ are linearly dependent along the curve. The function $\overline{\kappa}_a$ on $J$ such that 
\begin{align}
\label{frenov}
  \nabla^2_{\overline{\alpha}^{\,\prime}} \overline{\alpha}^{\,\prime}
  = - \overline{\kappa}_a \overline{\alpha}^{\,\prime}
\end{align}
is said to be the equi-affine curvature of the curve $\overline{\alpha}$. $\overline{\kappa}_a$ can be calculated in the following way:
\begin{align*}
  \overline{\kappa}_a 
  = \varOmega(\nabla_{\overline{\alpha}^{\,\prime}} \overline{\alpha}^{\,\prime},
              \nabla^2_{\overline{\alpha}^{\,\prime}} \overline{\alpha}^{\,\prime}).
\end{align*}

Now, returning to the arbitrary parametrization $\alpha\colon I\to M^2$, consider two auxiliary functions $\mu$ and $\psi$ defined on $I$ by 
\begin{align*}
  \mu = \sigma^{\,\prime} = \sqrt[3]{\varOmega(\alpha^{\,\prime},
            \nabla_{\alpha^{\,\prime}}\alpha^{\,\prime})}, \quad
  \psi = \mu^{-1}.
\end{align*}
Define the Cartan moving frame $({\mathbf e}_1,{\mathbf e}_2)$ and the equi-affine curvature $\kappa_a$ of the curve $\alpha$ by
\begin{align*}
  {\mathbf e}_1 = \overline{\alpha}^{\,\prime} \circ \sigma,\quad
  {\mathbf e}_2 = (\nabla_{\overline{\alpha}^{\,\prime}}
                     \overline{\alpha}^{\,\prime}) \circ \sigma,\quad
  \kappa_a = \overline{\kappa}_a \circ \sigma.
\end{align*}
By direct calculations, we find that 
\begin{align*}
  \nabla_{\alpha^{\,\prime}}{\mathbf e}_1 
  &= \nabla_{\alpha^{\,\prime}}(\overline{\alpha}^{\,\prime} \circ \sigma)
   = \sigma^{\,\prime}
     \big((\nabla_{\overline{\alpha}^{\,\prime}}\overline{\alpha}^{\,\prime}) \circ \sigma\big)
   = \mu {\mathbf e}_2, \\
  \nabla_{\alpha^{\,\prime}}{\mathbf e}_2 
  &= \nabla_{\alpha^{\,\prime}}
     \big((\nabla_{\overline{\alpha}^{\,\prime}}\overline{\alpha}^{\,\prime}) \circ \sigma\big)
   = \sigma^{\,\prime} 
     \big((\nabla^2_{\overline{\alpha}^{\,\prime}}\overline{\alpha}^{\,\prime}) \circ \sigma\big)\\
  &= \sigma^{\,\prime} 
     \big((- \overline{\kappa}_a \overline{\alpha}^{\,\prime}) \circ \sigma\big)
   = - \sigma^{\,\prime} 
     (\overline{\kappa}_a \circ \sigma) (\overline{\alpha}^{\,\prime} \circ \sigma)
   = - \mu \kappa_a {\mathbf e}_1,
\end{align*}
where we have also applied (\ref{frenov}). 

Thus, the equi-affine Frenet type formulas of the curve $\alpha$ are 
\begin{equation}
\label{afffren}
  \nabla_{\alpha^{\,\prime}}{\mathbf e}_1 = \mu {\mathbf e}_2,\quad
		\nabla_{\alpha^{\,\prime}}{\mathbf e}_2 = - \mu \kappa_a {\mathbf e}_1.
\end{equation}
Since $\varOmega(\mathbf e_1, \mathbf e_2)=1$, using the second formula of (\ref{afffren}), we conclude 
\begin{align}
\label{arbkap}
  \kappa_a 
  = \psi \varOmega({\mathbf e}_2, \nabla_{\alpha^{\,\prime}}{\mathbf e}_2).
\end{align}

Now, we are going to find an explicit formula for the curvature $\kappa_a$. 

First, having (\ref{afffren}), we find 
\begin{align}
\label{e12}
  {\mathbf e}_1 
    = \psi \alpha^{\,\prime}, \quad
  {\mathbf e}_2 
    = \psi \nabla_{\alpha^{\,\prime}}{\mathbf e}_1 
    = \psi \psi^{\,\prime} \alpha^{\,\prime} + 
          \psi^2 \nabla_{\alpha^{\,\prime}}\alpha^{\,\prime}.
\end{align}
Next, by covariant differentiation along the curve, we get 
\begin{equation}
\label{der2}
  \nabla_{\alpha^{\,\prime}}{\mathbf e}_2
  = \big(\psi^{\,\prime\,2} + \psi \psi^{\,\prime\prime}\big) \alpha^{\,\prime}
    + 3 \psi \psi^{\prime}\, \nabla_{\alpha^{\,\prime}}\alpha^{\,\prime}
    + \psi^2 \nabla^2_{\alpha^{\,\prime}}\alpha^{\,\prime}.
\end{equation}

On the other hand, applying (\ref{e12}) and (\ref{der2}) to (\ref{arbkap}), we obtain
\begin{align}
\label{cexpl}
  \kappa_a 
    = \ & (2 \psi^3 \psi^{\,\prime\,2} - \psi^4 \psi^{\,\prime\prime}) 
      \varOmega(\alpha^{\,\prime},\nabla_{\alpha^{\,\prime}}{\alpha^{\,\prime}}) \\
      & \null+ \psi^4 \psi^{\,\prime}
         \varOmega(\alpha^{\,\prime},\nabla^2_{\alpha^{\,\prime}}{\alpha^{\,\prime}})
       + \psi^5 \varOmega(\nabla_{\alpha^{\,\prime}}\alpha^{\,\prime},
                            \nabla^2_{\alpha^{\,\prime}}{\alpha^{\,\prime}}). \nonumber
\end{align}
But $\psi^3 \varOmega (\alpha^{\,\prime},\nabla_{\alpha^{\,\prime}}{\alpha^{\,\prime}}) = 1$, and consequently, $\psi^4 \varOmega (\alpha^{\,\prime},\nabla^2_{\alpha^{\,\prime}}{\alpha^{\,\prime}}) = - 3 \psi^{\,\prime}$. The last two equalities turn formula (\ref{cexpl}) into 
\begin{align*}
  \kappa_a = \null - \psi^{\,\prime\,2} - \psi \psi^{\,\prime\prime} 
       + \psi^5 \varOmega
          (\nabla_{\alpha^{\,\prime}}\alpha^{\,\prime},
           \nabla^2_{\alpha^{\,\prime}}{\alpha^{\,\prime}}). 
\end{align*}

Thus, we can state the following theorem.

\begin{theorem}
The equi-affine curvature $\kappa_a$ of a curve in an equi-affine 2-manifold is given by the formula 
\begin{align}
\label{curvexpl}
  \kappa_a = \null-\frac12 \big(\psi^2\big)^{\prime\prime}
       + \psi^5 \varOmega
          (\nabla_{\alpha^{\,\prime}}\alpha^{\,\prime},
           \nabla^2_{\alpha^{\,\prime}}{\alpha^{\,\prime}}). 
\end{align}
\end{theorem}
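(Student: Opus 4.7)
The plan is to start from formula (\ref{arbkap}), namely $\kappa_a=\psi\,\varOmega(\mathbf{e}_2,\nabla_{\alpha'}\mathbf{e}_2)$, and systematically rewrite everything in terms of $\alpha'$, $\nabla_{\alpha'}\alpha'$ and $\nabla^2_{\alpha'}\alpha'$ using the expressions (\ref{e12}) for $\mathbf{e}_1$ and $\mathbf{e}_2$ in the original parametrization. The first concrete step is to covariantly differentiate $\mathbf{e}_2=\psi\psi'\alpha'+\psi^2\nabla_{\alpha'}\alpha'$ along the curve; applying the Leibniz rule for $\nabla_{\alpha'}$ produces exactly the three-term expression (\ref{der2}), where I should be careful to collect the coefficient of $\nabla_{\alpha'}\alpha'$ correctly (it equals $3\psi\psi'$, the sum of the contribution from differentiating $\psi\psi'$ and from differentiating $\psi^2$).

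Next I would substitute these expressions into $\kappa_a=\psi\,\varOmega(\mathbf{e}_2,\nabla_{\alpha'}\mathbf{e}_2)$. Using the bilinearity and the antisymmetry of $\varOmega$, the only surviving pairings are $\varOmega(\alpha',\nabla_{\alpha'}\alpha')$, $\varOmega(\alpha',\nabla^2_{\alpha'}\alpha')$ and $\varOmega(\nabla_{\alpha'}\alpha',\nabla^2_{\alpha'}\alpha')$; a careful bookkeeping of coefficients gives precisely (\ref{cexpl}).

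The decisive simplification then comes from the defining identity $\psi^3\varOmega(\alpha',\nabla_{\alpha'}\alpha')=1$, which follows from $\mu=\psi^{-1}=\sqrt[3]{\varOmega(\alpha',\nabla_{\alpha'}\alpha')}$. Differentiating this identity with respect to $t$ yields
\begin{align*}
  3\psi^2\psi'\,\varOmega(\alpha',\nabla_{\alpha'}\alpha')
  +\psi^3\,\varOmega(\alpha',\nabla^2_{\alpha'}\alpha')=0,
\end{align*}
so that $\psi^4\varOmega(\alpha',\nabla^2_{\alpha'}\alpha')=-3\psi'$. Plugging these two relations into (\ref{cexpl}) collapses the first line to $2\psi'^2-\psi\psi''$ and turns the $\psi^4\psi'\,\varOmega(\alpha',\nabla^2_{\alpha'}\alpha')$ term into $-3\psi'^2$, leaving $-\psi'^2-\psi\psi''$ plus the last term.

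The final step is the cosmetic but essential observation that $-\psi'^2-\psi\psi''=-\tfrac12(\psi^2)''$, which delivers (\ref{curvexpl}). I do not expect any real obstacle: the argument is a careful but mechanical computation, and the only place where one can easily slip is in the coefficient tracking of step two and in remembering to use the differentiated form of the normalization together with the undifferentiated one. Since this theorem essentially records the computation already carried out in the preceding paragraphs, the role of the proof is to present these three ingredients — the substitution, the differentiated normalization, and the identity $-\psi'^2-\psi\psi''=-\tfrac12(\psi^2)''$ — in a clean order.
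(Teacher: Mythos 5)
Your proposal is correct and follows essentially the same route as the paper: differentiate $\mathbf e_2$ from (\ref{e12}) to get (\ref{der2}), substitute into (\ref{arbkap}) to obtain (\ref{cexpl}), then simplify using $\psi^3\varOmega(\alpha^{\,\prime},\nabla_{\alpha^{\,\prime}}\alpha^{\,\prime})=1$ and its differentiated consequence $\psi^4\varOmega(\alpha^{\,\prime},\nabla^2_{\alpha^{\,\prime}}\alpha^{\,\prime})=-3\psi^{\,\prime}$ (the paper uses the parallelism of $\varOmega$ here, as you do implicitly), and finally rewrite $-\psi^{\,\prime\,2}-\psi\psi^{\,\prime\prime}=-\tfrac12(\psi^2)^{\prime\prime}$. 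All coefficients in your bookkeeping match the paper's.
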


The second equi-affine Frenet type equation (\ref{afffren}), with the aid of (\ref{e12}) and (\ref{der2}), leads to the following theorem. 

\begin{theorem}
The equi-affine curvature $\kappa_a$ of a curve in an equi-affine 2-manifold satisfies the differential equation
\begin{align*}
   \psi^2 \nabla^2_{\alpha^{\,\prime}}\alpha^{\,\prime} 
   + 3 \psi \psi^{\prime}\, \nabla_{\alpha^{\,\prime}}\alpha^{\,\prime}
   + \big(\psi^{\,\prime\,2} + \psi \psi^{\,\prime\prime}
          + \kappa_a \big) \alpha^{\,\prime} = 0.
\end{align*}
\end{theorem}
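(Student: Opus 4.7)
The plan is to substitute the explicit expressions already derived for $\mathbf{e}_1$ and $\nabla_{\alpha'}\mathbf{e}_2$ into the second equi-affine Frenet-type equation of (\ref{afffren}) and read off the desired identity. Since the theorem is essentially a reformulation of that equation on the arbitrary-parameter side, no new geometric content is required; the work is purely algebraic.

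First, I would rewrite the right-hand side of $\nabla_{\alpha^{\,\prime}}\mathbf{e}_2 = -\mu\kappa_a\mathbf{e}_1$ in terms of the original parameter. By (\ref{e12}) we have $\mathbf{e}_1 = \psi\alpha^{\,\prime}$, and since $\psi = \mu^{-1}$, the product $\mu\psi$ equals $1$. Therefore the right-hand side collapses to $-\mu\kappa_a\psi\alpha^{\,\prime} = -\kappa_a\alpha^{\,\prime}$.

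Next, I would substitute the expansion (\ref{der2}) for the left-hand side. Equating
\begin{align*}
   \big(\psi^{\,\prime\,2}+\psi\psi^{\,\prime\prime}\big)\alpha^{\,\prime}
   + 3\psi\psi^{\,\prime}\,\nabla_{\alpha^{\,\prime}}\alpha^{\,\prime}
   + \psi^2\nabla^2_{\alpha^{\,\prime}}\alpha^{\,\prime}
   = -\kappa_a\alpha^{\,\prime}
\end{align*}
and bringing all terms to the left-hand side yields exactly the stated differential equation. No additional manipulation (such as the reductions involving $\psi^3\varOmega(\alpha^{\,\prime},\nabla_{\alpha^{\,\prime}}\alpha^{\,\prime})=1$ used in the proof of Theorem~1) is needed here.

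I do not expect any genuine obstacle: the only point deserving attention is to recognize that the factor $\mu$ in $-\mu\kappa_a\mathbf{e}_1$ cancels against the $\psi$ produced by $\mathbf{e}_1 = \psi\alpha^{\,\prime}$, so that the coefficient of $\alpha^{\,\prime}$ on the right reduces cleanly to $-\kappa_a$. Everything else is bookkeeping on the three terms in (\ref{der2}).
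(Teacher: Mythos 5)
Your proposal is correct and coincides with the paper's own (implicit) argument: the paper derives Theorem~2 precisely by inserting $\mathbf{e}_1=\psi\alpha^{\,\prime}$ from (\ref{e12}) and the expansion (\ref{der2}) into the second equation of (\ref{afffren}), with the cancellation $\mu\psi=1$ reducing the right-hand side to $-\kappa_a\alpha^{\,\prime}$ exactly as you describe. You are also right that the reductions used for Theorem~1 play no role here.
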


As a consequence of Theorem 2, we obtain the following 

\begin{corollary}
If $\psi=\const$ (equivalently, $\mu=\const$), then the equi-affine curvature $\kappa_a$ of a curve in an equi-affine 2-manifold satisfies the differential equation
\begin{align*}
  \psi^2 \nabla^2_{\alpha^{\,\prime}}\alpha^{\,\prime} + \kappa_a \alpha^{\,\prime}=0.
\end{align*}
\end{corollary}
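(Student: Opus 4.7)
The plan is to obtain the corollary as an immediate specialization of Theorem~2 by setting the derivatives of $\psi$ equal to zero. Since $\psi=\mu^{-1}$ by definition, the equivalence of the conditions $\psi=\const$ and $\mu=\const$ is clear (both $\psi$ and $\mu$ are nowhere zero along the curve, as $\mu=\sqrt[3]{\varOmega(\alpha^{\,\prime},\nabla_{\alpha^{\,\prime}}\alpha^{\,\prime})}\neq 0$ by nondegeneracy), so we may work with the hypothesis $\psi=\const$.

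Next, I would write down the differential equation of Theorem~2,
\begin{align*}
   \psi^2 \nabla^2_{\alpha^{\,\prime}}\alpha^{\,\prime}
   + 3 \psi \psi^{\prime}\, \nabla_{\alpha^{\,\prime}}\alpha^{\,\prime}
   + \big(\psi^{\,\prime\,2} + \psi \psi^{\,\prime\prime}
          + \kappa_a \big) \alpha^{\,\prime} = 0,
\end{align*}
and substitute $\psi^{\,\prime} = 0$ and $\psi^{\,\prime\prime} = 0$. The middle term $3\psi\psi^{\prime}\nabla_{\alpha^{\,\prime}}\alpha^{\,\prime}$ vanishes identically, and inside the coefficient of $\alpha^{\,\prime}$ the two terms $\psi^{\,\prime\,2}$ and $\psi\psi^{\,\prime\prime}$ both drop out, leaving only $\kappa_a$. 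What remains is exactly the asserted equation
\begin{align*}
  \psi^2 \nabla^2_{\alpha^{\,\prime}}\alpha^{\,\prime} + \kappa_a \alpha^{\,\prime}=0.
\end{align*}

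There is no real obstacle here; the only thing worth pointing out explicitly is the equivalence of the two forms of the hypothesis, which is why the statement includes the parenthetical remark. Because the derivation is a one-line substitution into Theorem~2, the proof is essentially a calculation-free invocation of that theorem, and no additional identities (such as \eqref{curvexpl}) need to be used.
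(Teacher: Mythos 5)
Your proposal is correct and follows exactly the route the paper intends: the corollary is stated there as an immediate consequence of Theorem~2, obtained by setting $\psi^{\,\prime}=0$ and $\psi^{\,\prime\prime}=0$ in its differential equation, which is precisely your one-line substitution. Your added remark on the equivalence of $\psi=\const$ and $\mu=\const$ (via $\mu=\psi^{-1}\neq0$, guaranteed by nondegeneracy of the curve) is a harmless and accurate supplement to what the paper leaves implicit.
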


\section{Pseudo-Riemannian 2-manifolds}

Let $(M^2,g)$ be a pseudo-Riemannian $2$-manifold. For our purposes, we always assume that $(M^2,g)$ is connected and oriented. Assign a parameter $\omega(=\pm1)$ to the metric $g$ by setting $\omega=1$ if the signature of the metric $g$ is $(++)$ or $(--)$, and $\omega=-1$ if the signature of the metric $g$ is $(+-)$.

Denote by $\varOmega$ the natural volume 2-form generated by the metric $g$, and assume that $\varOmega$ is compatible with the orientation of the manifold $M^2$ (see e.g. \cite{ON}). The form $\varOmega$ is given locally by
\begin{equation*}
  \varOmega = 2 \sqrt{|G|}\, dx^1 \wedge dx^2
            = \sqrt{|G|}\, (dx^1 \otimes dx^2 - dx^2 \otimes dx^1),
\end{equation*}
where $(x^1,x^2)$ are local coordinates in a chart belonging to the oriented atlas of the manifold, and $g_{ij}=g(\partial/\partial x^i,\partial/\partial x^j)$ and $G = g_{11}g_{22} - g_{12}^2$. Note that $|G|=\omega G$. 

For a pair of tangent vectors $u,v\in T_pM^2$, $p\in M^2$, one has: (a) $\varOmega(u,v)\neq 0$ if and only if $u,v$ are linearly independent; (b) $\varOmega(u,v)>0$ or $\varOmega(u,v)<0$ if and only if the orientation of the frame $(u,v)$ is positive or negative, respectively; (c) $\varOmega(u,v)=\pm1$ when $u,v$ are orthonormal. 

On the manifold $M^2$, define a $(1,1)$-tensor field $J$ by
\begin{align}
\label{defJ}
  g(X,JY) = \varOmega(X,Y),\ X,Y\in\mathfrak X(M),
\end{align}
$\mathfrak X(M)$ being the Lie algebra of vector fields on $M^2$. In local coordinates $(x^1,x^2)$, the defining condition takes the form $g_{is} {J^s}_j = \varOmega_{ij}$ (Einstein's summation convention is used). Therefore, ${J^i}_j = g^{is}\varOmega_{sj}$, and consequently
\begin{align*}
  {J^1}_1 = - {J^2}_2 = \omega \frac{g_{12}}{\sqrt{|G|}},\quad 
		{J^2}_1 = - \omega \frac{g_{11}}{\sqrt{|G|}},\quad 
		{J^1}_2 = \omega \frac{g_{22}}{\sqrt{|G|}}. 
\end{align*}
It is now straightforward to verify that ${J^i}_s {J^s}_j = - \omega \mathop{\it{Id}}^i_j$, that is, 
\begin{align}
\label{jj}
  J^2 = -\omega\mathop{\it{Id}}. 
\end{align}
This means that $J$ is an almost complex structure in the case $\omega=1$, and it is an almost paracomplex structure in the case $\omega=-1$. The relations 
\begin{align}
\label{gjjg}
  \varOmega(X,JY) = - g(JX,JY) = - \omega g(X,Y), \ X,Y\in\mathfrak X(M), 
\end{align}
are consequences of (\ref{defJ}) and (\ref{jj}). Thus, the pair $(J,g)$ is an almost Hermitian structure in the case where $\omega=1$, and an almost para-Hermitian structure in the case where $\omega=-1$.

In fact, the pair $(J,g)$ becomes a K\"ahlerian or para-K\"ahlerian structure on $M^2$ according to $\omega=+1$ or $\omega=-1$, since in the both cases, $J$ is integrable and $\varOmega$ is a symplectic form on $M^2$. Thus, $\nabla J=0$, where $\nabla$ is the Levi-Civita connection of $(M^2,g)$. For various applications of para-K\"ahlerian structures, see \cite{CFG,IT,JR}, etc.


\section{Frenet curves in pseudo-Riemannian 2-manifolds}

Similarly as in the Euclidean plane and in the Minkowski plane (cf. \cite{BB,GAS,SO}, etc.), we will use the almost (para)complex structure $J$ to define the normal vector field and to describe the curvature of a curve on a pseudo-Riemannian 2-manifold. For the classical method, see e.g.\ \cite{H}.

Let $(M^2,g)$ be a pseudo-Riemannian 2-manifold. Let $\alpha$ be a curve in $(M^2,g)$, that is, a smooth mapping $\alpha\colon I\to M^2$, $I$ being an open interval. We will consider only non-singular curves, that is, curves for which $g(\alpha^{\,\prime}(t),\alpha^{\,\prime}(t))\neq0$ for any $t\in I$. The speed function $\nu\colon I\to\mathbb R^{+}$ is defined by 
\begin{equation}
\label{nu}
  \nu(t) = |g(\alpha^{\,\prime}(t),\alpha^{\,\prime}(t))|^{1/2},\quad t\in I,
\end{equation}
and the arclength parameter $s$ starting at $t_0\in I$ is 
\begin{align*}
  s(t) = \int_{t_0}^t \nu(u)\,du, \ t\in I.
\end{align*}

The curve $\alpha$ is called a geodesic in the manifold $(M^2,g)$ (possibly, under a suitable changing of the parametrization) if and only if the vector fields $\nabla_{\alpha^{\,\prime}}\alpha^{\,\prime}$ and $\alpha^{\,\prime}$ are linearly dependent at each point of the curve. As is well-known, $\alpha$ is a geodesic if and only if $\nabla_{\alpha^{\,\prime}}\alpha^{\,\prime} = (\nu^{\,\prime}/\nu) \alpha^{\,\prime}$, or equivalently, $\varOmega(\alpha^{\,\prime},\nabla_{\alpha^{\,\prime}}\alpha^{\,\prime})=0$. 

Along the curve $\alpha$, the unit tangent vector field is defined by
\begin{align}
\label{e1-riem}
  \mathbf T = \dfrac{\alpha^{\,\prime}}{\nu}
		     = \dfrac{\alpha^{\,\prime}}{|g(\alpha^{\,\prime},\alpha^{\,\prime})|^{1/2}}.
\end{align}
As the normal vector field along the curve $\alpha$, we choose 
\begin{align}
\label{e2-riem}
  \mathbf N = - \omega \varepsilon J\mathbf T, 
\end{align}
where $\varepsilon=g(\mathbf T,\mathbf T)(=\pm1)$ and $J$ is the almost (para)complex structure defined in the previous section. Using (\ref{defJ}) and (\ref{gjjg}), we check that $g(\mathbf N,\mathbf N) = \omega \varepsilon$, $g(\mathbf T,\mathbf N) = 0$, $\varOmega(\mathbf T,\mathbf N)=1$. Thus, $\mathbf N$ is the unique unit vector field along the curve which is orthogonal to $\mathbf T$ and such that the pair $(\mathbf T,\mathbf N)$ is positively oriented. 

Since the vector field $\nabla_{\mathbf T}\mathbf T$ is orthogonal to $\mathbf T$ at every point of the curve, there exists a function $\kappa_r$ on $I$ such that 
\begin{align}
\label{fren1}
  \nabla_{\mathbf T}\mathbf T = \kappa_r \mathbf N.
\end{align}
The function $\kappa_r$ is called the (oriented, Frenet) curvature of the curve $\alpha$. The pair $(\mathbf T,\mathbf N)$ is the Frenet frame and (\ref{fren1}) is the Frenet equation of the curve $\alpha$. 

By direct calculations, in which we use among others, (\ref{e1-riem}), (\ref{e2-riem}) and (\ref{fren1}), we find 
\begin{align*}
  \kappa_r = \varOmega({\mathbf T}, \nabla_{\mathbf T}{\mathbf T}) 
           = \frac{1}{\nu^2} 
              \varOmega\Big(\alpha^{\,\prime}, 
                  \nabla_{\alpha^{\,\prime}}\Big(\frac{\alpha^{\,\prime}}{\nu}\Big)\Big)
           = \frac{1}{\nu^3} 
              \varOmega(\alpha^{\,\prime}, 
                  \nabla_{\alpha^{\,\prime}}\alpha^{\,\prime}).
\end{align*}

Thus, we have obtained the following formula for the curvature of a Frenet curve. 

\begin{theorem}
The curvature of a Frenet curve in a pseudo-Riemannian 2-manifold $(M^2,g)$ is given by the formula
\begin{align}
\label{kap5}
  \kappa_r = \frac{\varOmega(\alpha^{\,\prime},\nabla_{\alpha^{\,\prime}}\alpha^{\,\prime})}
                {|g(\alpha^{\,\prime},\alpha^{\,\prime})|^{3/2}}
         = - \frac{g(\nabla_{\alpha^{\,\prime}}\alpha^{\,\prime},J\alpha^{\,\prime})}
                  {|g(\alpha^{\,\prime},\alpha^{\,\prime})|^{3/2}},
\end{align}
where $\varOmega$ is the natural volume element on $(M^2,g)$ and $J$ is the natural almost (para)complex structure generated by $\varOmega$. 
\end{theorem}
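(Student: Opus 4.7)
The plan is to derive the formula directly from the Frenet equation (\ref{fren1}). First, applying $\varOmega(\mathbf T,\cdot)$ to both sides of $\nabla_{\mathbf T}\mathbf T = \kappa_r\mathbf N$ and using the already-established identity $\varOmega(\mathbf T,\mathbf N)=1$, I would extract the scalar curvature in the form $\kappa_r = \varOmega(\mathbf T,\nabla_{\mathbf T}\mathbf T)$. This is the standard trick of contracting a Frenet equation against the dual of the vector that appears on its right-hand side.

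Next I would convert to the arbitrary parametrization. Writing $\mathbf T = \alpha^{\,\prime}/\nu$ and applying the Leibniz rule for covariant differentiation along the curve gives $\nabla_{\alpha^{\,\prime}}\mathbf T = (1/\nu)\nabla_{\alpha^{\,\prime}}\alpha^{\,\prime} - (\nu^{\,\prime}/\nu^2)\alpha^{\,\prime}$. Substituting this into $\varOmega(\mathbf T,\nabla_{\mathbf T}\mathbf T) = (1/\nu)\varOmega(\mathbf T,\nabla_{\alpha^{\,\prime}}\mathbf T)$, the $\alpha^{\,\prime}$-term drops out by antisymmetry of $\varOmega$ (since $\mathbf T$ and $\alpha^{\,\prime}$ are collinear), producing $\kappa_r = \nu^{-3}\varOmega(\alpha^{\,\prime},\nabla_{\alpha^{\,\prime}}\alpha^{\,\prime})$. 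Replacing $\nu^3$ with $|g(\alpha^{\,\prime},\alpha^{\,\prime})|^{3/2}$ via (\ref{nu}) yields the first equality in (\ref{kap5}).

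For the second equality, the plan is to pass from $\varOmega$ to $g$ via the defining relation (\ref{defJ}). Using antisymmetry, I would write $\varOmega(\alpha^{\,\prime},\nabla_{\alpha^{\,\prime}}\alpha^{\,\prime}) = -\varOmega(\nabla_{\alpha^{\,\prime}}\alpha^{\,\prime},\alpha^{\,\prime}) = -g(\nabla_{\alpha^{\,\prime}}\alpha^{\,\prime}, J\alpha^{\,\prime})$, which finishes the argument.

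No step is genuinely difficult; the only mildly technical point is the differentiation of $\nu = |g(\alpha^{\,\prime},\alpha^{\,\prime})|^{1/2}$, but since $g(\alpha^{\,\prime},\alpha^{\,\prime})$ has constant sign along the non-singular curve, the absolute value contributes only a fixed overall sign that is absorbed and cancels in the ratio. The whole argument is essentially two line-by-line substitutions bracketed by the Frenet equation on one end and the defining properties of $J$ and $\varOmega$ on the other.
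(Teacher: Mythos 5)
Your proposal is correct and follows essentially the same route as the paper: extract $\kappa_r = \varOmega(\mathbf T,\nabla_{\mathbf T}\mathbf T)$ from the Frenet equation via $\varOmega(\mathbf T,\mathbf N)=1$, pass to the arbitrary parametrization with the Leibniz rule so the tangential term vanishes by antisymmetry of $\varOmega$, and obtain the second equality from the defining relation $g(X,JY)=\varOmega(X,Y)$ together with antisymmetry. Your side remark about $\nu$ is handled even more simply than you suggest, since the $\nu^{\,\prime}$-term is annihilated outright by $\varOmega(\alpha^{\,\prime},\alpha^{\,\prime})=0$, so only smoothness of $\nu$ (guaranteed by the constant sign of $g(\alpha^{\,\prime},\alpha^{\,\prime})$ on the interval) is needed.
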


\section{The equi-affine and Frenet curvatures}

In this section, we determine a relation between the Frenet curvature and equi-affine curvature of a curve in a pseudo-Riemannian 2-manifold.

Let $(M^2,g)$ be a pseudo-Riemannian 2-manifold. The metric $g$ generates on $M^2$ the equi-affine structure $(\nabla,\varOmega)$ with $\nabla$ being the Levi-Civita connection and $\varOmega$ the natural volume element with respect to the metric $g$. 

Thus, for a nondegenerate curve in $(M^2,g)$, we have (i) the equi-affine curvature $\kappa_a$, which according to (\ref{curvexpl}), we rewrite as 
\begin{align}
\label{kap-aff}
  \kappa_a &= \null-\frac12 \big(\psi^2\big)^{\prime\prime}
       + \psi^5 \varOmega
          (\nabla_{\alpha^{\,\prime}}\alpha^{\,\prime},
           \nabla^2_{\alpha^{\,\prime}}{\alpha^{\,\prime}}),
\end{align}
where $\psi$ is the auxiliary function defined by 
\begin{align}
\label{phi-aff}
   \psi = \big(\varOmega(\alpha^{\,\prime},
            \nabla_{\alpha^{\,\prime}}\alpha^{\,\prime})\big)^{-1/3};
\end{align}
and (ii) the Frenet curvature $\kappa_r$, which according to (\ref{kap5}), we rewrite as 
\begin{align}
\label{kap-riem}
  \kappa_r = \frac{\varOmega(\alpha^{\,\prime},
                   \nabla_{\alpha^{\,\prime}}\alpha^{\,\prime})}
                {|g(\alpha^{\,\prime},\alpha^{\,\prime})|^{3/2}}.
\end{align}

The following theorem states a relation between those curvatures. 

\begin{theorem}
The equi-affine curvature $\kappa_a$ and the Frenet curvature $\kappa_r$ of a regular curve in a pseudo-Riemannian 2-manifold $(M^2,g)$ are related by 
\begin{align}
\label{equfr}
  \kappa_a = \frac{1}{9 {\kappa_r}^{8/3}}
             \big(3 \nu^{-2} \kappa_r {\kappa_r}^{\prime\prime} 
                  - 5 \nu^{-2} {\kappa_r}^{\prime\,2}
                  - 3 \nu^{-3} \nu^{\,\prime} \kappa_r {\kappa_r}^{\prime} 
                  + 9 \omega {\kappa_r}^4\big). 
\end{align}
\end{theorem}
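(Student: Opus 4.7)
The plan is to express everything in the Frenet frame $(\mathbf T,\mathbf N)$ introduced in Section~4 and then evaluate the two summands of the formula (\ref{kap-aff}) for $\kappa_a$ in that frame, using the Frenet formula (\ref{fren1}) for $\kappa_r$.

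First I would complete the Frenet equations by computing $\nabla_{\mathbf T}\mathbf N$. Differentiating the relations $g(\mathbf N,\mathbf N)=\omega\varepsilon$ and $g(\mathbf T,\mathbf N)=0$ along $\mathbf T$, and using $\nabla_{\mathbf T}\mathbf T=\kappa_r\mathbf N$ together with $g(\mathbf T,\mathbf T)=\varepsilon$, forces $\nabla_{\mathbf T}\mathbf N=-\omega\kappa_r\mathbf T$. Writing $\alpha^{\,\prime}=\nu\mathbf T$ and using $\nabla_{\alpha^{\,\prime}}=\nu\nabla_{\mathbf T}$ along the curve then yields
\begin{align*}
   \nabla_{\alpha^{\,\prime}}\alpha^{\,\prime}
      &= \nu^{\,\prime}\mathbf T + \nu^2\kappa_r\mathbf N,\\
   \nabla^2_{\alpha^{\,\prime}}\alpha^{\,\prime}
      &= (\nu^{\,\prime\prime}-\omega\nu^3\kappa_r^2)\,\mathbf T
         + (3\nu\nu^{\,\prime}\kappa_r + \nu^2{\kappa_r}^{\prime})\,\mathbf N.
\end{align*}
From the first expression and $\varOmega(\mathbf T,\mathbf N)=1$ I read off $\varOmega(\alpha^{\,\prime},\nabla_{\alpha^{\,\prime}}\alpha^{\,\prime})=\nu^3\kappa_r$; hence by (\ref{phi-aff}) the auxiliary function simplifies to $\psi=\nu^{-1}\kappa_r^{-1/3}$.

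Next I would evaluate the two contributions to (\ref{kap-aff}). For the second one, multiplying out the Frenet expansions above and keeping only the cross $\mathbf T,\mathbf N$ products gives $\varOmega(\nabla_{\alpha^{\,\prime}}\alpha^{\,\prime},\nabla^2_{\alpha^{\,\prime}}\alpha^{\,\prime}) = \omega\nu^5\kappa_r^3-\nu^2\kappa_r\nu^{\,\prime\prime}+3\nu\nu^{\,\prime\,2}\kappa_r+\nu^2\nu^{\,\prime}{\kappa_r}^{\prime}$; multiplication by $\psi^5=\nu^{-5}\kappa_r^{-5/3}$ then produces four explicit summands, including the key $\omega\kappa_r^{4/3}$. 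The first contribution, $-\tfrac12(\psi^2)^{\prime\prime}$ with $\psi^2=\nu^{-2}\kappa_r^{-2/3}$, is a routine one-variable computation via $(fh)^{\prime\prime}=f^{\,\prime\prime}h+2f^{\,\prime}h^{\,\prime}+fh^{\,\prime\prime}$. When the two contributions are summed, the $\nu^{\,\prime\prime}\kappa_r^{-2/3}$ and $\nu^{\,\prime\,2}\kappa_r^{-2/3}$ summands cancel outright, while the two $\nu^{-3}\nu^{\,\prime}\kappa_r^{-5/3}{\kappa_r}^{\prime}$ pieces combine with coefficient $-\tfrac13$. Multiplying through by $9\kappa_r^{8/3}$ then delivers precisely (\ref{equfr}).

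The main obstacle is bookkeeping rather than any conceptual difficulty. Three points deserve attention: the sign of $\nabla_{\mathbf T}\mathbf N$, which is the unique place where the signature parameter $\omega$ enters and which is responsible for the $9\omega\kappa_r^4$ summand in the final formula; the real cube root convention in passing from $\nu^3\kappa_r$ to $\psi$, which requires $\kappa_r\neq 0$ along the curve (implicit in the regularity hypothesis, since the formula (\ref{equfr}) features $\kappa_r^{8/3}$ in the denominator); and the exact cancellation of the $\nu^{\,\prime\prime}$ and $\nu^{\,\prime\,2}$ terms, which is what guarantees that the right-hand side of (\ref{equfr}) contains no second derivative of $\nu$.
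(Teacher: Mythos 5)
Your proposal is correct and follows essentially the same route as the paper: both compute $\varOmega(\alpha^{\,\prime},\nabla_{\alpha^{\,\prime}}\alpha^{\,\prime})=\nu^3\kappa_r$, hence $\psi=\nu^{-1}\kappa_r^{-1/3}$, expand $\nabla_{\alpha^{\,\prime}}\alpha^{\,\prime}$ and $\nabla^2_{\alpha^{\,\prime}}\alpha^{\,\prime}$ via the Frenet equation, and substitute into the two terms of (\ref{kap-aff}). Your use of the orthonormal frame $(\mathbf T,\mathbf N)$ with the second Frenet equation $\nabla_{\mathbf T}\mathbf N=-\omega\kappa_r\mathbf T$ is only a cosmetic rescaling of the paper's frame $(\alpha^{\,\prime},J\alpha^{\,\prime})$, where the same $\omega$ enters through $J^2=-\omega\mathop{\it Id}$ and $\varOmega(\alpha^{\,\prime},J\alpha^{\,\prime})=-\omega\varepsilon\nu^2$, and all your intermediate expressions (including (\ref{om2-ar}) and the cancellations of the $\nu^{\,\prime\prime}$ and $\nu^{\,\prime\,2}$ terms) agree with the paper's.
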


\begin{proof}
First, by (\ref{kap-riem}) and (\ref{nu}), we have 
\begin{align}
\label{om-ar}
  \varOmega(\alpha^{\,\prime},\nabla_{\alpha^{\,\prime}}\alpha^{\,\prime}) = \nu^3 \kappa_r, 
\end{align}
which, applied to (\ref{phi-aff}), gives 
\begin{align}
\label{phi-ar}
  \psi = \nu^{-1} {\kappa_r}^{-1/3}, 
\end{align}
and hence we find that 
\begin{align}
\label{phi2-ar}
  (\psi^2)^{\prime\prime} 
  &= (6 \nu^{-4} \nu^{\,\prime\,2} - 2 \nu^{-3} \nu^{\,\prime\prime}) {\kappa_r}^{-2/3}
     + \frac83 \nu^{-3} \nu^{\,\prime} {\kappa_r}^{-5/3} {\kappa_r}^{\prime} \\
  &\quad\ + \frac{10}{9} \nu^{-2} {\kappa_r}^{-8/3} {\kappa_r}^{\prime\,2} 
     - \frac23 \nu^{-2} {\kappa_r}^{-5/3} {\kappa_r}^{\prime\prime}, \nonumber
\end{align}
Moreover, from (\ref{e1-riem}), (\ref{e2-riem}) and (\ref{fren1}), we get
\begin{align}
\label{aequ-1}
  \nabla_{\alpha^{\,\prime}}\alpha^{\,\prime} 
   = \nu^{-1} \nu^{\,\prime} \alpha^{\,\prime} 
     - \omega \varepsilon \nu \kappa_r J \alpha^{\,\prime},
\end{align}
which, by the covariant differentiation and applying (\ref{jj}), gives 
\begin{align}
\label{aequ-2}
  \nabla^2_{\alpha^{\,\prime}}\alpha^{\,\prime} 
    = (\nu^{-1} \nu^{\,\prime\prime} - \omega \nu^2 {\kappa_r}^2) \alpha^{\,\prime} 
       - \omega \varepsilon (3\nu^{\,\prime} \kappa_r + \nu {\kappa_r}^{\prime}) J\alpha^{\,\prime}. 
\end{align}
We also need the relation  
\begin{align}
\label{aequ-3}
   \varOmega(\alpha^{\,\prime},J{\alpha^{\,\prime}}) 
   = - \omega g(\alpha^{\,\prime},\alpha^{\,\prime}) 
   = - \omega \varepsilon \nu^2, 
\end{align}
which is a consequence of (\ref{gjjg}) and (\ref{nu}). Now, with the aid of (\ref{aequ-1}) - (\ref{aequ-3}) and (\ref{om-ar}), we obtain 
\begin{align}
\label{om2-ar}
  &\varOmega(\nabla_{\alpha^{\,\prime}}\alpha^{\,\prime},
           \nabla^2_{\alpha^{\,\prime}}{\alpha^{\,\prime}}) \\
  &\quad= \omega \varepsilon 
          \big((\nu^{\,\prime\prime} - 3 \nu^{-1} \nu^{\,\prime\,2}) \kappa_r 
               - \nu^{\,\prime} {\kappa_r}^{\prime} 
               - \omega \nu^3 {\kappa_r}^3\big) 
          \varOmega(\alpha^{\,\prime},J{\alpha^{\,\prime}}) \nonumber \\
  &\quad= (3 \nu \nu^{\,\prime\,2} - \nu^2 \nu^{\,\prime\prime}) \kappa_r 
           + \nu^2 \nu^{\,\prime} {\kappa_r}^{\prime} 
           + \omega \nu^5 {\kappa_r}^3. \nonumber
\end{align}
Finally, applying (\ref{phi-ar}), (\ref{phi2-ar}) and (\ref{om2-ar}) to (\ref{kap-aff}), we obtain formula (\ref{equfr}).
\end{proof}

When the curve $\alpha$ is parametrized by the arclength ($\nu=1$), formula (\ref{equfr}) looks much simpler. Namely, we have the following theorem.

\begin{theorem}
The equi-affine curvature $\kappa_a$ and the Frenet curvature $\kappa_r$ of a regular, arclength parametrized curve in a pseudo-Riemannian 2-manifold $(M^2,g)$ are related by the equations
\begin{align}
\label{equfra}
  \kappa_a &= \frac{1}{9 {\kappa_r}^{8/3}} 
             \big(3 \kappa_r {\kappa_r}^{\prime\prime} 
                  - 5 {\kappa_r}^{\prime\,2} + 9 \omega {\kappa_r}^4\big) \\
           &= \null- \frac12 \Big(\frac{1}{{\kappa_r}^{2/3}}\Big)^{\prime\prime}
              + \omega {\kappa_r}^{4/3}. \nonumber
\end{align}
\end{theorem}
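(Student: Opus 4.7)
The plan is to derive Theorem~4 as a direct specialization of Theorem~3 and then recognize the resulting expression as a total second derivative plus a residual term. No new geometric input is needed; the argument is purely computational and short.

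First, I would specialize formula (\ref{equfr}) of Theorem~3 to the arclength case $\nu \equiv 1$. Since then $\nu^{\,\prime} = 0$ and $\nu^{-k} = 1$ for every exponent $k$, the two middle summands $-3\nu^{-3}\nu^{\,\prime}\kappa_r\kappa_r^{\,\prime}$ and the $\nu^{-2}$ factors both collapse, yielding immediately
\begin{align*}
  \kappa_a = \frac{1}{9\kappa_r^{8/3}}\big(3\kappa_r\kappa_r^{\,\prime\prime} - 5\kappa_r^{\,\prime\,2} + 9\omega\kappa_r^4\big),
\end{align*}
which is the first equality in (\ref{equfra}).

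Next, for the second equality, I would simply expand $-\tfrac12(\kappa_r^{-2/3})^{\prime\prime}$ by the chain rule. A short computation gives
\begin{align*}
  \Big(\kappa_r^{-2/3}\Big)^{\prime} &= -\tfrac23 \kappa_r^{-5/3} \kappa_r^{\,\prime}, \\
  \Big(\kappa_r^{-2/3}\Big)^{\prime\prime} &= \tfrac{10}{9}\kappa_r^{-8/3}\kappa_r^{\,\prime\,2} - \tfrac23\kappa_r^{-5/3}\kappa_r^{\,\prime\prime},
\end{align*}
so that
\begin{align*}
  -\tfrac12\Big(\kappa_r^{-2/3}\Big)^{\prime\prime} = \frac{1}{9\kappa_r^{8/3}}\big(3\kappa_r\kappa_r^{\,\prime\prime} - 5\kappa_r^{\,\prime\,2}\big).
\end{align*}
Adding $\omega\kappa_r^{4/3} = \dfrac{9\omega\kappa_r^4}{9\kappa_r^{8/3}}$ reproduces the first form of $\kappa_a$ exactly, establishing the second equality in (\ref{equfra}).

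There is no real obstacle here; the only thing to be careful about is the bookkeeping of the fractional powers and signs in the second derivative of $\kappa_r^{-2/3}$, where a slip in the coefficients $-\tfrac23$ and $-\tfrac53$ would spoil the match. Because Theorem~3 already carries all the geometric content, the statement for arclength parametrization is an immediate corollary, and the ``nice'' closed form $-\tfrac12(\kappa_r^{-2/3})^{\prime\prime} + \omega\kappa_r^{4/3}$ is best discovered by noticing that the expression in the first equality is homogeneous of the right weight in $\kappa_r$ to be such a second derivative up to the curvature term $\omega\kappa_r^{4/3}$.
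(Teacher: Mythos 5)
Your proof is correct and follows essentially the same route as the paper, which states Theorem~4 as the direct specialization of formula (\ref{equfr}) to $\nu=1$ without further argument; your chain-rule verification of the closed form $-\tfrac12(\kappa_r^{-2/3})^{\prime\prime}+\omega\kappa_r^{4/3}$ is accurate, with the coefficients $\tfrac{10}{9}$ and $-\tfrac23$ matching the expansion used in the paper's own computation (\ref{phi2-ar}).
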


\begin{remark}
In the case of curves in the Euclidean plane, formula (\ref{equfra}) has appeared in \cite[Equ.\ (28)]{FH}, \cite[Remark 2.2.4]{Gh}, \cite[Remark 6]{ST}. 
\end{remark}

For a curve on a pseudo-Riemannian 2-manifold, it follows from formula (\ref{equfra}) that if $\kappa_r$ is constant, then $\kappa_a = \omega {\kappa_r}^{4/3}$ is constant too. However, the converse is not true. To see this, it suffices to recall that in the Euclidean plane as well as in the Minkowski plane, the quadratic curves (ellipses, parabolas and hyperbolas) do not have constant Frenet curvatures in general and have constant equi-affine curvatures. Below, we will exhibit a class of examples of curves with constant equi-affine and non-constant Frenet curvatures in certain Riemannian and Lorentzian 2-manifolds different from Euclidean and Minkowski planes.

\bigskip
\noindent
{\bf Examples.} 
On the set $M^2 = \{(x,y)\in\mathbb R^2\colon x>0\}$, consider the pseudo-Riemannian metric 
\begin{align*}
  g = x^{-3} (dx\otimes dx + \omega dy\otimes dy),
\end{align*}
where $\omega=\pm1$. The metric $g$ is Riemannian or Lorentzian according to whether $\omega=1$ or $\omega=-1$. Its non-zero Christoffel symbols are 
\begin{align*}
  {\Gamma_{11}}^1 = {\Gamma_{12}}^2 = {\Gamma_{21}}^2= - {\Gamma_{22}}^1 = - \frac{3 \omega}{2x}.
\end{align*}
The scalar curvature is equal to $-3x$, so the metric has non-constant Gauss curvature. The natural volume form is $\varOmega = 2 x^{-3} dx\wedge dy$. 

(i) In $(M^2,g)$, consider the curves $\alpha(t) = (\lambda, t), \ t\in\mathbb R$, where $\lambda$ is a positive constant. For such curves, 
\begin{align*}
  \alpha^{\,\prime}(t) = \frac{\partial}{\partial y}\bigg|_{\alpha(t)}, \quad
  \nu(t) = \frac{1}{\lambda^{3/2}}, \quad
  (\nabla_{\alpha^{\,\prime}}\alpha^{\,\prime})(t) 
     = \frac{3\omega}{2\lambda} \frac{\partial}{\partial x}\bigg|_{\alpha(t)}.
\end{align*}
Therefore, using (\ref{kap-riem}) and then (\ref{equfr}), we find that 
\begin{align*}
  \kappa_r(t) = - \frac{3\omega \lambda^{1/2}}{2},\quad 
  \kappa_a(t) = \frac{3^{4/3} \omega \lambda^{2/3}}{2^{4/3}}.
\end{align*} 

(ii) In $(M^2,g)$ with $\omega=1$ (the Riemannian case), consider the curves 
\begin{align*}
  \alpha(t) = (\lambda \cos t, y_0 + \lambda \sin t), 
  \ t\in\Big(-\frac{\pi}{2},\frac{\pi}{2}\Big), 
  \ y_0\in\mathbb R, \ \lambda\in\mathbb R, \ \lambda > 0. 
\end{align*}
Thus, we have 
\begin{align*}
  \alpha^{\,\prime}(t) &= \null- \lambda \sin t \frac{\partial}{\partial x}\bigg|_{\alpha(t)} 
                      + \lambda \cos t \frac{\partial}{\partial y}\bigg|_{\alpha(t)}, \\
  \nu(t) &= \frac{1}{\lambda^{1/2} (\cos t)^{3/2}}, \\
  (\nabla_{\alpha^{\,\prime}}\alpha^{\,\prime})(t) 
     &= \frac{\lambda}{2} (\cos t - 3 \sin t \tan t)
        \frac{\partial}{\partial x}\bigg|_{\alpha(t)} 
        + 2 \lambda \sin t \frac{\partial}{\partial y}\bigg|_{\alpha(t)}.
\end{align*}
Therefore, using (\ref{kap-riem}) and then (\ref{equfr}), we find that 
\begin{align*}
  \kappa_r(t) = - \left(\frac{\lambda}{4}\right)^{1/2} (\cos t)^{3/2},\quad
  \kappa_a(t) = - \left(\frac{\lambda}{4}\right)^{2/3}.
\end{align*}

(iii) In $(M^2,g)$ with $\omega=-1$ (the Lorentzian case), consider the curves 
\begin{align*}
  \alpha(t) = (\lambda \cosh t, y_0 + \lambda \sinh t), 
  \ y_0\in\mathbb R, \ \lambda\in\mathbb R, \lambda > 0, 
  \ t\in\mathbb R. 
\end{align*}
Thus, we have 
\begin{align*}
  \alpha^{\,\prime}(t) &= \lambda \sinh t \frac{\partial}{\partial x}\bigg|_{\alpha(t)} 
                      + \lambda \cosh t \frac{\partial}{\partial y}\bigg|_{\alpha(t)}, \\
  \nu(t) &= \frac{1}{\lambda^{1/2} (\cosh t)^{3/2}}, \\
  (\nabla_{\alpha^{\,\prime}}\alpha^{\,\prime})(t) 
     &= \null- \frac{\lambda}{2} (\cosh t + 3 \sinh t \tanh t) 
        \frac{\partial}{\partial x}\bigg|_{\alpha(t)} 
        - 2 \lambda \sinh t \frac{\partial}{\partial y}\bigg|_{\alpha(t)}.
\end{align*}
Therefore, using (\ref{kap-riem}) and then (\ref{equfr}), we find that
\begin{align*}
  \kappa_r(t) = \left(\frac{\lambda}{4}\right)^{1/2} (\cosh t)^{3/2},\quad
  \kappa_a(t) = \left(\frac{\lambda}{4}\right)^{2/3}.
\end{align*}


\end{document}